\newtheorem{thm}{Theorem}[section]
\newtheorem{cor}[thm]{Corollary}
\newtheorem{lemma}[thm]{Lemma}
\newtheorem{theorem}[thm]{Theorem}
\theoremstyle{definition}
\newtheorem{remark}[thm]{Remark}
\theoremstyle{question}
\theoremstyle{Conjecture}
\newtheorem{con}[thm]{Conjecture}
\numberwithin{equation}{section}
\begin{document}

\title[groups with given same order type]{A characterization of $A_5$ by its Same-order type}%
\author{L. Jafari Taghvasani and M. Zarrin}%

\address{Department of Mathematics, University of Kurdistan, P.O. Box: 416, Sanandaj, Iran}%
 \email{L.jafari@sci.uok.ac.ir and Zarrin@ipm.ir}
\begin{abstract}
Let $G$ be a group, define an equivalence relation $\thicksim$ as below:
\[\forall \ g, h \in G \ \ g\thicksim h \Longleftrightarrow |g|=|h|\]
the set of sizes of equivalence classes with respect to this relation is called the same-order type of $G$. Shen et al. (Monatsh. Math. 160 (2010), 337-341.), showed that $A_5$ is the only  group with the same-order type $\{1,15,20,24\}$. In this paper, among other things, we prove that a nonabelian simple group $G$  has same-order type $\{r,m,n,k\}$ if and only if $G\cong A_5$.\\\\
 {\bf Keywords}.
  Element order, Same-order type, Characterization, Simple group.\\
{\bf Mathematics Subject Classification (2000)}. 20D60, 20D06.
\end{abstract}
\maketitle

\section{\textbf{ Introduction and results}}

If $n$ is an integer, then we denote by $\pi(n)$ the set of all prime divisors of $n$.  Let $G$ be a group. Denote by $\pi(G)$ the set of primes $p$ such that $G$ contains an element of order
$p$, $\pi_e(G)$ the set of element orders of $G$ and $s_t$ be the number of elements of order $t$ in $G$ where $t\in \pi_e(G)$.

Let $nse(G) =
\{s_t | t\in \pi_e(G)\}$. In fact $nse(G)$ is the set of sizes of elements with the same order in $G$ (the influence
of $nse(G)$ on the structure of groups was studied by some authors (for instance see \cite{kka}, \cite{shen}). On the other hand, if we define an equivalence relation $\thicksim$ as below:
\[\forall \ g, h \in G \ \ g\thicksim h \Longleftrightarrow |g|=|h|,\]
then the set of sizes of equivalence classes with respect to this relation is (which called the same-order type of $G$ and denoted by $\alpha(G)$) equal to the set of sizes of elements with the same order in $G$. That is, $$nse(G)=\alpha(G).$$

 We say that a group $G$
is a $\alpha_n$-group if $|\alpha(G)|=n$ (or $|nse(G)|=n$). Clearly the only $\alpha_1$-groups are $1$ , $\Bbb{Z}_2$. In \cite{shen}, Shen showed that every $\alpha_2$-group ($\alpha_3$-group) is nilpotent (solvable, respectively). Furthermore he gave the structure of these groups and he raised the conjecture that if $G$ is a $\alpha_n$-group, then $|\pi(G)|\leq n$. Most recently, in \cite{jz}, the authors gave the partial answer to the Shen's conjecture and showed that his conjecture is true for the class of nilpotent groups.

In \cite{ssjm}, the authors showed that a group $G$ is isomorphic to $A_5$ if and only if $nse(G)=\alpha(A_5)=\{1,15,20,24\}$. Here we give  a new characterization of $A_5$ and show that $A_5$ is the only nonabelian simple group with the same-order type $\{r,m,n,k\}$ (note that as $s_e=1$ we may assume that $r=1$, where $e$ is the trivial element of $G$).

\begin{theorem}\label{t1}
Suppose that $G$ is a nonabelian simple group with same-order type $\{1,m,n,k\}$. Then $G\cong  A_5$.
\end{theorem}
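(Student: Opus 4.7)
The plan is to combine elementary element-counting with the classification of finite simple groups of small prime spectrum. Since $G$ is a nonabelian simple group, Feit--Thompson forces $|G|$ to be even, so $2 \in \pi(G)$, and triviality of $Z(G)$ forces the number of involutions to exceed $1$. For each prime $p \in \pi(G)$ the count $s_p$ of elements of order $p$ is a positive multiple of $p-1$ (elements of order $p$ come in packets of $\varphi(p)=p-1$, one per cyclic subgroup of order $p$), so $s_p \geq p-1$. Because $nse(G) \setminus \{1\} = \{m,n,k\}$ has only three elements, every such $s_p$ must lie in $\{m,n,k\}$; in particular $p \leq 1 + \max\{m,n,k\}$, and primes in $\pi(G)$ cannot be arbitrarily large.

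The first main step would be to establish $|\pi(G)| \leq 3$. Assume toward contradiction that $|\pi(G)| \geq 4$. By pigeonhole, two distinct primes $p < q$ in $\pi(G)$ satisfy $s_p = s_q$, so $\mathrm{lcm}(p-1,q-1)$ divides this common value. I would combine this arithmetic constraint with Frobenius' theorem (for each $d \mid |G|$, $d$ divides $|\{x \in G : x^d = 1\}| = \sum_{t \mid d} s_t$) applied to small composite divisors such as $pq$, together with the global identity $|G| = 1 + \sum_{t \in \pi_e(G),\, t \neq 1} s_t$, which is highly restrictive since there are at most three distinct nonzero summands available. Sylow bounds on the number of Sylow $p$-subgroups of a simple group and the fact that $s_2$ must be odd (since $\{x : x^2=1\}$ has even order) provide further leverage.

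Once $|\pi(G)| \leq 3$ is in hand, I would invoke Herzog's classification of nonabelian simple groups whose order is divisible by exactly three primes: the complete list is $A_5$, $A_6$, $L_2(7)$, $L_2(8)$, $L_2(17)$, $L_3(3)$, $U_3(3)$, and $U_4(2)$. For each of these groups, $nse(G)$ can be read off directly from its element-order statistics (e.g.\ from the ATLAS). A short case check shows that only $A_5$ satisfies $|nse(G)| = 4$, with $\alpha(A_5) = \{1, 15, 20, 24\}$; every other candidate produces at least five distinct values of $s_t$. This would finish the proof, invoking the Shen--Shi--Jin--Chen characterization \cite{ssjm} only incidentally (the direct count suffices).

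The principal obstacle is the first step: simultaneous coincidences among the values $s_p$ for several primes are not ruled out by pigeonhole alone, so bounding $|\pi(G)|$ requires delicate combinatorics mixing the divisibilities $(p-1)\mid s_p$, Frobenius' theorem on composite divisors, and the size constraint from the class equation. If $|\pi(G)| \leq 3$ proves too tight to obtain directly, the fallback is to prove $|\pi(G)| \leq 4$ and appeal to the analogous (still finite) classification of $K_4$-simple groups, eliminating each by the same element-count inspection.
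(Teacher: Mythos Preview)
Your architecture matches the paper's --- reduce to $|\pi(G)|=3$, then inspect the eight $K_3$-simple groups --- and your parity observation ($s_2$ odd, every $s_p$ even for odd $p$) is the right opening move. The gap is in the reduction itself. Pigeonhole gives two odd primes with $s_p=s_q$ once $|\pi(G)|\geq 4$, but your proposed toolkit (Frobenius divisibilities on $f(pq)$, Sylow counts, the identity $|G|=1+\sum_t s_t$) does not close out the scenario in which \emph{all} odd primes share a common value $s_{p_i}=n$. The Frobenius relations then only force $s_{p_ip_j}\neq 0$ and $p_i\mid s_{p_ip_j}-1$ for every pair of odd primes, which is consistent rather than contradictory; and your class-equation remark conflates three distinct \emph{values} with three \emph{summands} --- since $\pi_e(G)$ may be large, one only obtains $|G|=1+am+bn+ck$ with $a,b,c$ unbounded, which is not restrictive. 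The fallback to $|\pi(G)|\leq 4$ faces the identical obstruction.

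The paper resolves exactly this case with two structural inputs you do not invoke. If $s_{2p_i}=0$ for every odd prime $p_i$, then involution centralizers are $2$-groups, so $G$ is a simple $C_{2,2}$-group, and Suzuki's 1961 classification reduces the problem to $A_5$, $A_6$, $L_3(4)$, $L_2(q)$, $Sz(q)$, each handled directly. Otherwise the prime graph of $G$ is connected (all odd primes are mutually adjacent via $s_{p_ip_j}\neq 0$, and $2$ is adjacent to some $p_i$), whereupon the Lucido--Moghaddamfar theorem supplies three primes $r,s,t$ with $\{rs,rt,st\}\cap\pi_e(G)=\emptyset$, contradicting $s_{p_ip_j}\neq 0$. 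This is Lemma~\ref{pq}: some two odd primes have distinct $s$-values. Only then does the elementary Frobenius bookkeeping you envisage (the paper's Lemma~\ref{pi}) pin down $\pi(G)=\{2,p,q\}$.
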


 In fact, we show that $A_5$ is the only nonabelian simple $\alpha_4$-group. To prove our main result, we shall obtain a nice property of simple groups. In fact we show that for every nonabelian finite simple group  $G$, there exist two odd prime divisors $p$ and $q$ of the order of $G$ such that $s_p\neq s_q$ (see Lemma \ref{pq} and also Conjecture \ref{co}, below).

\section{\textbf{Proof of the Main Theorem}}

 First of all note that by Lemma $3$ of \cite{ssjm}, we can assume that $G$ is finite. To prove Theorem \ref{t1} we need the following lemmas. Denote by  $X_n$ is the set of all elements of order $n$ in a group $G$.

\begin{lemma}\label{d}
Let $G$ be a finite group and $k$ be a positive integer dividing $|G|$. Then $k\mid f(k)$ and $\phi (k)\mid s_k$, where $f(k)=|\{g \in G : g^k=1\}|$.
\end{lemma}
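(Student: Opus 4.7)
The plan is to split the lemma into its two independent divisibility assertions and handle them separately. The first assertion, $k \mid f(k)$, is the classical theorem of Frobenius (1895) asserting that whenever $k$ divides the order of a finite group $G$, the equation $x^k = 1$ has a number of solutions divisible by $k$. I would either cite this result directly or, for self-containment, reproduce the standard inductive proof: one groups solutions according to the cyclic subgroups they generate and combines Sylow theory with induction on $|G|$, using that $f(k) = \sum_{d \mid k} s_d$.

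The second assertion, $\phi(k) \mid s_k$, admits a short direct argument. Let $X_k$ denote the set of elements of order exactly $k$ in $G$. Define an equivalence relation on $X_k$ by $g \sim h$ if and only if $\langle g \rangle = \langle h \rangle$. Each equivalence class consists precisely of the generators of a fixed cyclic subgroup of order $k$, and a cyclic group of order $k$ has exactly $\phi(k)$ generators. Hence $X_k$ partitions into blocks of size $\phi(k)$, so $s_k = |X_k|$ is a multiple of $\phi(k)$.

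The only nontrivial ingredient is Frobenius's theorem itself; the second part is essentially a one-line counting argument once one fixes the right partition. In principle one can also derive $\phi(k) \mid s_k$ from $k \mid f(k)$ via Möbius inversion on the divisor lattice of $k$ together with the identity $k = \sum_{d \mid k} \phi(d)$, but the direct cyclic-subgroup partition above is cleaner and avoids any further appeal to divisibility manipulations. I expect no genuine obstacle beyond quoting Frobenius's theorem.
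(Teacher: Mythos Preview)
Your proposal is correct and matches the paper's treatment: the paper itself gives no argument beyond declaring the proof ``straightforward'' and citing Frobenius (1895) and Shen, so your invocation of Frobenius's theorem for $k\mid f(k)$ together with the cyclic-subgroup partition for $\phi(k)\mid s_k$ is exactly in line with (indeed more explicit than) what the paper offers.
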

\begin{proof}
The proof is straightforward (also see \cite{f} and \cite{shen}).
\end{proof}

\begin{remark}
If $m\mid n$ then $\alpha{(C_m)}\subseteq \alpha{(C_n)}$, where $C_m$ and $C_n$ are cyclic groups of order $m$ and $n$, respectively.
\end{remark}
For any prime power $q$, we denote by $L_n(q)$ the projective special
linear group of degree $n$ over the finite field of size  $q$.
\begin{lemma}\label{2}
Let $G=L_2(q)$, where $q$ is a prime power of two or $q$ is a Fermat prime and or a Mersenne prime, then there exist $r, s\in \pi(G)\setminus \{2\}$ such that $s_r\neq s_s$.
\end{lemma}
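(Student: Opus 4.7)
The plan is to exploit the well-known conjugacy class structure of $L_2(q)$ and compute $s_r$ explicitly for each odd prime $r\in\pi(G)$. Every nontrivial element of $L_2(q)$ lies either in a conjugate of a Sylow subgroup of order $q$ for the characteristic, or in a conjugate of the split torus of order $(q-1)/d$, or in a conjugate of the non-split torus of order $(q+1)/d$, where $d=\gcd(2,q-1)$; moreover, distinct conjugate tori intersect trivially (the center of $L_2(q)$ is trivial). Counting conjugates of the tori and using $\phi(r)=r-1$ for a prime $r$, one obtains
\[
s_r = (r-1)\cdot\frac{q(q+1)}{2}\ \text{for odd}\ r\mid\tfrac{q-1}{d},\qquad s_r = (r-1)\cdot\frac{q(q-1)}{2}\ \text{for odd}\ r\mid\tfrac{q+1}{d},
\]
and, whenever $q$ is itself an odd prime, a direct Sylow count gives $s_q=q^2-1=(q-1)(q+1)$.

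For $q=2^n$ with $n\ge 2$, the odd primes in $\pi(G)$ are exactly the prime divisors of $(q-1)(q+1)$, and $q-1,q+1$ are coprime. If either of $q\mp 1$ has two distinct odd prime divisors $r\ne r'$, then the corresponding $s_r$ share a common factor and differ only in $r-1$ versus $r'-1$, so $s_r\ne s_{r'}$. Otherwise $q-1=r^a$ and $q+1=s^b$ for odd primes $r,s$, and setting $s_r=s_s$ collapses to
\[
q(s-r)=r+s-2.
\]
A sign analysis rules out $r>s$, so one may assume $s>r$, giving $s>q/2$; since $s$ is a prime-power divisor of $q+1$ exceeding $q/2$, one checks that $s=q+1$ is forced, and substitution yields $r(q+1)=q^2+1$, i.e. $(q+1)\mid 2$, which is impossible for $q\ge 4$.

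For a Fermat prime $q=2^n+1\ge 5$, the factor $q-1=2^n$ contributes no odd prime, so every odd prime in $\pi(G)$ other than $q$ itself divides $(q+1)/2=2^{n-1}+1\ge 3$; picking any such $r$ and equating $s_r=(r-1)q(q-1)/2$ to $s_q=(q-1)(q+1)$ leads to $rq=3q+2$, forcing the impossibility $q\mid 2$. The Mersenne case $q=2^t-1\ge 7$ is exactly symmetric: one picks an odd prime $r\mid (q-1)/2=2^{t-1}-1\ge 3$, and $s_r=s_q$ collapses to $rq=3q-2$, again forcing $q\mid 2$.

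The main obstacle I anticipate is the borderline subcase of even $q$ in which both $q-1$ and $q+1$ are pure odd prime powers (as for $q=4$ and $q=8$, where $\pi(G)\setminus\{2\}$ has only two elements and the ``two primes from one side'' trick is unavailable). There the whole argument rests on ruling out integer solutions of $q(s-r)=r+s-2$ compatible with $r\mid q-1$ and $s\mid q+1$; the elementary bound $r+s-2<2\max(r,s)$ is what pins $s$ down to $q+1$ itself, after which divisibility of $q^2+1$ by $q+1$ closes the loop.
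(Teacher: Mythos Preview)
Your proof is correct. Both you and the paper exploit the partition of $L_2(q)$ into conjugates of the two tori and the Sylow subgroup for the characteristic, but the arguments then diverge in interesting ways.

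For $q$ a power of $2$, the paper argues by contradiction without ever writing down a formula for $s_r$: assuming $s_p$ is constant on odd primes, Frobenius's divisibility $p_i\mid f(p_ip_j)$ together with $p_i\mid 1+s_{p_i}$ forces $s_{p_ip_j}\equiv 1\pmod{p_i}$, hence $s_{p_ip_j}\ne 0$ for every pair of odd primes; an element of order $p_1p_i$ with $p_1\mid q-1$ and $p_i\mid q+1$ must then sit in a single cyclic torus, forcing $\pi(q+1)\subseteq\pi(q-1)$, which is impossible since $q\pm 1$ are coprime. This bypasses your Diophantine equation entirely. Your route---reducing to $q(s-r)=r+s-2$ and squeezing out $s=q+1$---is more computational but self-contained, needing no appeal to Frobenius's theorem.

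For the Fermat and Mersenne cases the paper, like you, computes $s_r=\ell(r-1)$ for odd primes $r$ on the single odd torus side (with $\ell$ the number of conjugate tori), but then compares \emph{two such primes $r,s\ne q$ with each other}, concluding $s_r\ne s_s$ simply because $r\ne s$. Your choice to compare a torus prime against $s_q=(q-1)(q+1)$ is actually more robust: the paper's comparison tacitly requires at least two odd primes dividing $(q\mp 1)/2$, which fails for $q\in\{5,7,17\}$, whereas your argument covers these small cases uniformly.
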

\begin{proof}
Suppose, on the contrary, that $s_p=s_r$ for every two odd prime divisor $p$ and $r$ of $|G|$. It follows that $$s_{p_ip_j} \neq 0\text{ for any two odd prime~~~} p_i, p_j\in \pi(G).$$

 Let $q=t^m$ for some prime $t$ and a natural number $m$. It is well-known that the group $G=L_2(q)$ has a partition $\{P^g, U^g, S^g \mid g\in G\}$, where $U$ is cyclic subgroup of order $\frac{(t^m-1)}{(2,t^m-1)}$, $S$ is a cyclic subgroup of order $\frac{t^m+1}{(2,t^m-1)}$ and $P$ is a Sylow $t$-subgroup of $G$.

Suppose that $q$ is a prime power of two, then $|U^g|=q-1$ and $|S^g|=q+1$ for each $g\in G$. As $q-1$ and $q+1$ are two consecutive odd natural numbers, therefore there exists a prime odd number $p_1\in \pi(q-1)\setminus \pi(q+1)$ and also, as mentioned above, there exists $x\in X_{p_1p_i}$ for each $p_i \in \pi(S)=\pi(q+1)$. As $p_1\nmid q+1$ so $x \notin S^g$ for any $g\in G$ and so $x\in U^g$ for some $g\in G$ (note that every element of odd order of $G$ belongs to  $U^g$ or $S^g$ for some $g\in G$). Hence $\pi(q+1)\subsetneq \pi(q-1)$ which is a contradiction.

  If $q$ is a  Mersenne prime, then $t^m+1$ is a power of two and $\frac{(t^m-1)}{(2,t^m-1)}=2^x-1$ and $\frac{t^m+1}{(2,t^m-1)}=2^y$ for some positive numbers $x$ and $y$. Hence the elements of odd order of $G$ lie in cyclic subgroups of order $2^x-1$ (the number of such subgroups is $l=\frac{(t^m+1)t^m}{2}$) or Sylow $t$-subgroup of $G$. Now because of $(\frac{(t^m-1)}{(2,t^m-1)}, t^m)=1$. It follows that, for any $r,s \in \pi(G)\setminus \{2,t\}$ we have $s_r=l\phi(r)=l(r-1)$ and $s_s=l(s-1)$ and hence $s_r\neq s_s$, where $\phi(n)$ is the Euler totient function.

  If $q$ is a  Fermat prime, then a similar proof to that of Mersenne prime, gives the result.
\end{proof}
\begin{lemma}\label{4}
Let $G=Sz(q)$ be the Suzuki group, then there exist $r, s\in \pi(G)\setminus \{2\}$ such that $s_r\neq s_s$.
\end{lemma}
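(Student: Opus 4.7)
The plan is to mimic the $L_2(q)$ treatment of Lemma~\ref{2}, exploiting the classical partition of $G = Sz(q)$, $q = 2^{2n+1}$, $n \geq 1$. Setting $r_0 = 2^{n+1}$ (so that $r_0^2 = 2q$), one has
\[
|G| = q^{2}(q-1)(q-r_{0}+1)(q+r_{0}+1),
\]
and $G$ contains three cyclic subgroups $T_{0}, T_{+}, T_{-}$ of orders $q-1$, $q+r_{0}+1$, $q-r_{0}+1$, with normalizers of orders $2(q-1)$, $4(q+r_{0}+1)$, $4(q-r_{0}+1)$ respectively. Each $T_{i}$ is a TI-subgroup, and together with the Sylow $2$-subgroup they give a partition of $G$: every odd-order element lies in a unique conjugate of exactly one $T_{i}$. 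The three torus orders are pairwise coprime, since $q^{2}+1 \equiv 2 \pmod{q-1}$ with $q-1$ odd forces $\gcd(q-1, q^{2}+1) = 1$, while $\gcd(q-r_{0}+1, q+r_{0}+1) \mid 2r_{0}$ and both factors are odd.

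Invoking the TI property, I would count, for any odd prime $p \in \pi(G)$,
\[
s_{p} = (p-1)\cdot\frac{|G|}{|N_{G}(T_{i})|},
\]
where $T_{i}$ is the unique torus whose order is divisible by $p$. Explicitly, $s_{p} = (p-1)q^{2}(q^{2}+1)/2$ if $p \mid q-1$, $s_{p} = (p-1)q^{2}(q-1)(q-r_{0}+1)/4$ if $p \mid q+r_{0}+1$, and $s_{p} = (p-1)q^{2}(q-1)(q+r_{0}+1)/4$ if $p \mid q-r_{0}+1$.

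Now assume for contradiction that $s_{p} = s_{p'}$ for every two odd primes $p, p' \in \pi(G)$. If some single factor among $q-1$, $q+r_{0}+1$, $q-r_{0}+1$ admitted two distinct odd prime divisors $p \ne p'$, the ratio $s_{p}/s_{p'} = (p-1)/(p'-1)$ would force $p = p'$. So each (odd) factor must be a prime power: write $q-1 = p_{1}^{a_{1}}$, $q+r_{0}+1 = p_{2}^{a_{2}}$, $q-r_{0}+1 = p_{3}^{a_{3}}$ with $p_{1}, p_{2}, p_{3}$ distinct odd primes. Then the identity $s_{p_{2}} = s_{p_{3}}$ simplifies to
\[
(p_{2}-1)\,p_{3}^{a_{3}} \;=\; (p_{3}-1)\,p_{2}^{a_{2}},
\]
and since $\gcd(p_{2}, p_{3}) = \gcd(p_{2}, p_{2}-1) = 1$, the prime power $p_{2}^{a_{2}}$ must divide $1$, which is impossible as $q+r_{0}+1 \geq 13$. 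This contradiction produces the required pair of odd primes.

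The main obstacle I anticipate is not the arithmetic, which is forced, but rather quoting the partition and TI-structure of $Sz(q)$ in a form that delivers the three $s_{p}$ formulas uniformly; once that is in hand, the rest of the argument parallels the Mersenne/Fermat case of Lemma~\ref{2} almost verbatim.
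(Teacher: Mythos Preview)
Your argument is correct, but it follows a different path from the paper's own proof.

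The paper does \emph{not} compute any $s_p$ explicitly. Instead, it argues exactly as in the $q=2^m$ case of Lemma~\ref{2}: from the blanket hypothesis $s_p=s_r$ for all odd primes $p,r\in\pi(G)$ one deduces (via Lemma~\ref{d}) that $s_{p_ip_j}\neq 0$ for every pair of odd primes $p_i,p_j$. Picking $p_i\in\pi(|B|)$ and $p_j\in\pi(|C|)$ (in your notation $|T_{-}|$ and $|T_{+}|$), the resulting element of order $p_ip_j$ cannot sit in any conjugate of $B$, $C$, or the Sylow $2$-subgroup, so the partition forces it into a conjugate of $A$ (your $T_0$). Varying $p_i,p_j$ gives $\pi(q^2+1)\subseteq\pi(q-1)$, contradicting $\gcd(q^2+1,q^2-1)=1$.

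By contrast, you go the route of the \emph{Mersenne/Fermat} half of Lemma~\ref{2}: you invoke the TI property and the normalizer indices $2(q-1)$, $4(q\pm r_0+1)$ to obtain closed formulas $s_p=(p-1)\,[G:N_G(T_i)]$, and then finish by an elementary divisibility argument on $(p_2-1)p_3^{a_3}=(p_3-1)p_2^{a_2}$. This is perfectly valid (and in fact the reduction to prime powers is not even needed: for any $p_2\mid q+r_0+1$ and $p_3\mid q-r_0+1$ the equality $(p_2-1)(q-r_0+1)=(p_3-1)(q+r_0+1)$ already fails modulo $p_2$). The trade-off is that your version quotes more structure of $Sz(q)$ (the exact normalizer orders, not merely the partition), whereas the paper's version only needs the partition together with the coprimality of the torus orders; on the other hand, your approach is self-contained and does not appeal to the auxiliary fact $s_p=s_r\Rightarrow s_{pr}\neq 0$.
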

\begin{proof}
Suppose, on the contrary, that $s_p=s_r$ for every two odd prime divisor $p$ and $r$ of $|G|$. Therefore $s_{p_ip_j} \neq 0$ for every two odd prime $p_i, p_j\in \pi(G)$.

 The Suzuki group $G=Sz(q)$ ($q=2^{2m+1}$, $m> 0$) has a partition $$\{A^x, B^x, C^x, F^x | x\in G\},$$ where $A$, $B$, $C$ are cyclic subgroups and $F$ is a Sylow $2$-subgroup of $G$. Also $|F|=q^2$, $|A|=q-1$, $|B|=q-2r+1$ and $|C|=q+2r+1$, where $r=\sqrt{\frac{q}{2}}$. In fact $|A|=2^{2m+1}$, $|B|=2^{2m+1}-2^{m+1}+1$ and $|C|=2^{2m+1}+2^{m+1}+1$. Because of  $|C|-|B|=2^{m+2}$ and $|B|$ and $|C|$ are odd, we get $(|B|, |C|)=1$. Now assume that $x\in X_{p_ip_j}$, where  $p_i\in \pi(B)$ and $p_j\in \pi(C)$. It follows that $x\in A$ and so  $\pi(G)\setminus \{2\}\subseteq \pi (A)$. On the other hand, $|G|=q^2(q-1)(q^2+1)$, so $\pi(q^2+1)\subseteq \pi(q-1)$, a contradiction (since otherwise $\pi(q^2+1)\subseteq \pi(q^2-1)$ which is impossible, as $q^2-1$ and $q^2+1$ are two consecutive odd numbers).
\end{proof}

Let $p$ be a prime. A group $G$ is called a $C_{p,p}$-group if and only if $p\in \pi(G)$ and and the centralizers of its elements of order $p$ in $G$ are $p$-groups.

\begin{lemma}$($see \cite{suzuki}$)$\label{0}
If $G$ is a finite nonabelian simple $C_{2,2}$-group, then $G$ is isomorphic to one of the following groups.
\begin{itemize}
\item[(a)]$A_5$, $A_6$, $L_3(4)$;
\item[(b)] $L_2(q)$ where $q$ is a Fermat prime, a Mersenne prime or a prime power of $2$;
\item[(c)] Sz(q), where $q$ is odd prime power of $2$.
\end{itemize}
\end{lemma}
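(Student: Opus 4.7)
This lemma is a classical theorem of Suzuki on the structure of finite nonabelian simple groups in which the centralizer of every involution is a $2$-group---the so-called CIT-groups. My plan would be to recover it through the theory of strongly embedded subgroups, together with the classifications of simple groups whose Sylow $2$-subgroups have a restricted shape.

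First, pick a Sylow $2$-subgroup $P$ of $G$ and an involution $t\in Z(P)$. Since $P$ centralizes $t$ and $C_G(t)$ is by hypothesis a $2$-group, we get $C_G(t)=P$, so the central involutions of $P$ are strongly isolated in $G$. The first real step is to argue that $P$ is a trivial-intersection subgroup: any involution $s\in P\cap P^g$ has $C_G(s)$ a $2$-group containing both $Z(P)$ and $Z(P)^g$, and a Sylow argument inside this $2$-group forces $P=P^g$, hence $g\in N_G(P)$. Combined with $C_G(t)=P$ this shows that $N_G(P)$ is a strongly embedded subgroup of $G$.

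Next I would invoke Bender's classification of finite simple groups with a strongly embedded subgroup, obtaining $G\cong L_2(2^n)$, $Sz(2^n)$, or $U_3(2^n)$. The unitary family is eliminated by a direct check that involution centralizers in $U_3(q)$ contain odd-order elements, leaving the $L_2(2^n)$ and $Sz(2^n)$ families of items (b) and (c). To reach the remaining groups on the list---$L_2(q)$ for $q$ a Fermat or Mersenne prime, together with $A_5$, $A_6$, and $L_3(4)$---one notes that their Sylow $2$-subgroups are Klein, dihedral, semidihedral, or wreathed, so I would appeal to the Gorenstein--Walter theorem and the Alperin--Brauer--Gorenstein classification and then filter the resulting list by the CIT property.

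The main obstacle is that the whole argument depends on two of the deepest pre-classification results in finite group theory: Bender's strongly-embedded-subgroup theorem, and the Gorenstein--Walter/Alperin--Brauer--Gorenstein dichotomy for dihedral and semidihedral Sylow $2$-subgroups. Reproducing these is completely out of scope, so in practice one takes the lemma as a black box---exactly what the authors do by citing Suzuki's original paper \cite{suzuki}. The converse direction, that each listed group is genuinely a $C_{2,2}$-group, is a routine check from the known character tables and is not where the difficulty lies.
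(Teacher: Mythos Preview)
The paper does not prove this lemma at all; it simply quotes Suzuki's classification \cite{suzuki} as a black box. Your sketch, by contrast, contains a genuine gap. The claim that the $C_{2,2}$ (CIT) hypothesis forces the Sylow $2$-subgroup $P$ to be TI is false, and the ``Sylow argument inside this $2$-group'' you allude to does not exist: from an involution $s\in P\cap P^{g}$ one only obtains $Z(P),\,Z(P)^{g}\subseteq C_G(s)$, and nothing forces $P=P^{g}$. Concretely, $A_6$, $L_3(4)$ and $L_2(q)$ for $q$ a Fermat or Mersenne prime are CIT groups whose Sylow $2$-subgroups are \emph{not} TI; in $A_6$, a non-central involution $s$ of a Sylow $P\cong D_8$ has $C_{A_6}(s)$ equal to a \emph{different} Sylow $2$-subgroup meeting $P$ in a Klein four-group. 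Such groups have no strongly embedded subgroup and hence cannot be produced by Bender's theorem. This is exactly why your argument then has to grope for ``the remaining groups'': the dichotomy between the TI and non-TI cases is real and is where the hard work in Suzuki's proof sits, but as written your TI step claims to cover every case, so the logic becomes internally inconsistent. Note also that the Sylow $2$-subgroup of $L_3(4)$ is a special $2$-group of order $64$, not dihedral, semidihedral or wreathed, so it is not captured by Gorenstein--Walter or Alperin--Brauer--Gorenstein either.

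A minor historical point: Bender (1971), Gorenstein--Walter (1965) and Alperin--Brauer--Gorenstein (1970) all postdate Suzuki's 1961 paper, which developed its own character-theoretic machinery (exceptional characters, coherence) precisely to handle this classification. As you yourself conclude, the pragmatic move is to cite Suzuki---which is exactly, and only, what the paper does.
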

\begin{lemma} $($\cite{prime}, Lemma 9$)$ \label{1}
If $G\neq A_{10}$ is a finite simple group and $\Gamma (G)$ is connected, then there exist three primes $r, s, t\in \pi(G)$ such that $\{rs, tr, ts\}\cap \pi_e(G)=\emptyset$.
\end{lemma}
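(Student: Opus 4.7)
The statement is a CFSG-based claim about the independence number of the prime graph $\Gamma(G)$, so I would open with a case analysis over the four families of finite simple groups: alternating, sporadic, classical and exceptional of Lie type. In each case the task is to exhibit three primes $r,s,t\in\pi(G)$ whose pairwise products all lie outside $\pi_e(G)$, i.e.\ an independent triangle in $\Gamma(G)$.

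For the alternating group $A_n$, two distinct odd primes $p,q\in\pi(n!)$ are adjacent in $\Gamma(A_n)$ iff $p+q\le n$, since an element of order $pq$ must move at least $p+q$ points and for odd primes the parity of the resulting cycle type is automatic. Hence any three primes lying in the interval $(n/2,\,n]$ are pairwise non-adjacent. A Chebyshev/Bertrand-type estimate on the prime counting function supplies three such primes for every sufficiently large $n$, and the small values reduce to a finite check. The sole genuine exception is $n=10$: here $\pi(A_{10})=\{2,3,5,7\}$, and a direct enumeration of cycle types in $A_{10}$ shows that the only non-edges of $\Gamma(A_{10})$ are $\{2,7\}$ and $\{5,7\}$, which share the vertex $7$ and hence fail to span an independent triangle. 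This pinpoints $A_{10}$ as the lone obstruction in the alternating family. For the $26$ sporadic groups the verification is a direct ATLAS lookup.

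The substantive case is the Lie-type groups over $\mathbb{F}_q$. Here I would use the factorization $|G(q)|=q^N\prod_d\Phi_d(q)^{m_d}$ together with the torus decomposition of semisimple elements: a Zsigmondy primitive prime divisor $r_d$ of $q^d-1$ is pinned to a single conjugacy class of maximal tori, and two such primes $r_{d_1},r_{d_2}$ are non-adjacent in $\Gamma(G)$ whenever their containing tori admit no common torus-overgroup, a condition recorded in the Carter/Deriziotis tables of maximal tori. For each classical and exceptional family I would select three indices $d_1,d_2,d_3$ that are pairwise ``torus-disjoint'' in this sense, which yields the required triangle. The main obstacle is the low-rank or small-characteristic exceptions where Zsigmondy fails or the relevant tori fuse; these reduce to a finite case-check, and the connectedness hypothesis on $\Gamma(G)$ is precisely what rules out the groups (certain $L_2(q)$, Suzuki and Ree groups) whose prime graph is too sparse to carry any independent triangle, aligning the statement with the known Williams-style classification of simple groups with disconnected prime graph.
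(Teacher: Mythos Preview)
The paper does not prove this lemma at all: it is quoted as Lemma~9 of Lucido--Moghaddamfar \cite{prime} and invoked as a black box, so there is no in-paper argument to compare against.

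Your sketch follows the route by which such results are actually established in the source literature (CFSG case split; Bertrand-type estimates to locate three large primes in $(n/2,n]$ for $A_n$, with the hand check correctly isolating $A_{10}$ via the non-edges $\{2,7\}$ and $\{5,7\}$; ATLAS inspection for the sporadics; Zsigmondy primitive prime divisors and the maximal-torus tables for Lie type), and in that sense it is on target. Two points deserve tightening. First, ``independent triangle'' is a contradiction in terms; you mean a coclique (independent set) of size three. Second, your final sentence misreads the role of the connectedness hypothesis: disconnectedness of $\Gamma(G)$ does not correlate with \emph{absence} of a $3$-coclique---quite the opposite, since groups like $L_2(q)$ with totally disconnected prime graph on three vertices exhibit a $3$-coclique trivially. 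The hypothesis $\Gamma(G)$ connected is there because the disconnected case is handled separately (indeed, in the present paper via the $C_{2,2}$ classification), not because the conclusion would fail there. In a full write-up you would need to treat the low-rank Lie-type groups and the Zsigmondy exceptions on their own merits rather than sweep them under the connectedness assumption.
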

We obtain a interesting property of simple groups.
\begin{lemma}\label{pq}
Let $G$ be a nonabelian finite simple group. Then there exist two odd prime divisors $p$ and $q$ of the order of $G$ such that $s_p\neq s_q$.
\end{lemma}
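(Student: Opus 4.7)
The plan is to argue by contradiction: suppose that $s_p = s_q = c$ for every pair of odd primes $p, q \in \pi(G)$. The first move is two applications of the Frobenius congruence $k \mid f(k)$ provided by Lemma \ref{d}. Applied to each odd prime $p \in \pi(G)$, it gives $p \mid 1 + s_p = 1 + c$, so $c \equiv -1 \pmod{p}$. Applied to $k = p_i p_j$ for any two distinct odd primes $p_i, p_j \in \pi(G)$, and using $f(p_i p_j) = 1 + s_{p_i} + s_{p_j} + s_{p_i p_j} = 1 + 2c + s_{p_i p_j}$, the Chinese remainder theorem yields $c \equiv -1 \pmod{p_i p_j}$, whence $s_{p_i p_j} \equiv 1 \pmod{p_i p_j}$. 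In particular $s_{p_i p_j} \geq 1$, so $p_i p_j \in \pi_e(G)$. Hence, under the contradictory hypothesis, the subgraph of the prime graph $\Gamma(G)$ induced on the odd primes of $\pi(G)$ must be complete.

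I would then split according to the global structure of $\Gamma(G)$. If $\Gamma(G)$ is connected and $G \ncong A_{10}$, then Lemma \ref{1} produces three primes $r, s, t \in \pi(G)$ that are pairwise non-adjacent; since at most one of $r, s, t$ equals $2$, at least two are odd, contradicting the completeness of the odd subgraph. If $\Gamma(G)$ is disconnected, the pairwise adjacency of all odd primes forces them into a single component, so $\{2\}$ is its own component; equivalently, no element of $G$ has order $2p$ for any odd $p \in \pi(G)$, the centralizer of every involution is a $2$-group, and $G$ is a $C_{2,2}$-group. Lemma \ref{0} then restricts $G$ to one of $A_5, A_6, L_3(4)$, $L_2(q)$ (with $q$ a Fermat or Mersenne prime or a prime power of $2$), or $Sz(q)$. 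The two Lie-type families are precisely what Lemmas \ref{2} and \ref{4} dispatch, producing $r, s \in \pi(G) \setminus \{2\}$ with $s_r \neq s_s$ directly. For $A_5, A_6, L_3(4)$, a glance at their element-order sets shows $15 \notin \pi_e$ in each case, exhibiting two odd primes with no edge between them, once again contradicting completeness.

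The one case outside this dichotomy is $G \cong A_{10}$. Here I would directly count elements of orders $3$ and $5$ in $A_{10}$ by summing over cycle types (for order $3$: $3$-cycles, $(3,3)$-cycles, and $(3,3,3)$-cycles; for order $5$: $5$-cycles and $(5,5)$-cycles) and verify $s_3 \neq s_5$. The main obstacle, I expect, is not in any single step but in arranging the case analysis so that each earlier lemma is invoked on exactly the case it was designed for; once the Frobenius congruence forces the odd subgraph of $\Gamma(G)$ to be complete, the rest routes cleanly through Lemmas \ref{1}, \ref{0}, \ref{2}, \ref{4}, together with direct inspection of a handful of small groups.
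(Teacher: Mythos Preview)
Your proof is correct and follows essentially the same route as the paper's: derive from the Frobenius congruence that all odd primes are pairwise adjacent in $\Gamma(G)$, then split into the $C_{2,2}$ case (handled via Lemmas \ref{0}, \ref{2}, \ref{4}) and the connected-prime-graph case (handled via Lemma \ref{1}). Your version is in fact slightly tidier than the paper's: you dispose of $A_5$, $A_6$, $L_3(4)$ by observing $15 \notin \pi_e$ rather than invoking GAP, and you explicitly treat $A_{10}$, which the paper's proof silently omits when it appeals to Lemma \ref{1}.
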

\begin{proof}
Put $\pi (G) =\{2, p_1, p_2 , \dots , p_t\}$. Suppose, to the contrary, that $s_{p_1}=s_{p_2}= \dots =s_{p_t}=n$. Since
\begin{align*}
p_i\mid f(p_i p_j)=1+s_{p_j}+s_{p_i}+s_{p_i p_j}=1+2n+s_{p_ip_j}
\end{align*}
and $p_i\mid 1+s_{p_i}$, so $p_i\mid s_{p_i p_j}-1$ and $s_{p_i p_j}=k\not\in \{0, n\}$, for all $i, j \in \{1, 2, \dots, t\}$ and $i\neq j$. Now we have two cases :

 Case (1).\;If $s_{2p_i}=0$, for all $1\leq i \leq n$, then the centralizer of any element of order two is a $2$-group. That is, $G$ is a $C_{2,2}$-group. Now according to Lemmas \ref{0}, \ref{4} and \ref{2}, it is enough to consider the following groups: $G=A_6$ or $L_3(4)$. But in this case it is easy to see (by the computational
group theory system GAP) that for any two prime divisors $p$ and $q$ of the order of $G$,  $s_p\neq s_q$.

Case (2).\;If there exists $i\in \{1, 2, \dots, n\}$, such that $s_{2p_i}\neq 0$, then the prime graph of $G$ is connected. So by Lemma \ref{1}, there exist three primes $r, s, t\in \pi(G)$ such that $\{rs, tr, ts\}\cap \pi_e(G)=\emptyset$, a contradiction (as for all $i , j$ we have $s_{p_ip_j}=k\neq0$, by hypothesis).
\end{proof}

\begin{cor}
Let $G$ be a nonabelian finite simple group. Then there exist two odd prime divisors $p$ and $q$ of the order of $G$ such that $\{1,s_2,s_p,s_q\}\subseteq \alpha(G)=nse(G)$. In fact, every  nonabelian finite simple group is a $\alpha_n$-group with $n\geq 4$ {\rm (}note that $s_2\neq 1$, otherwise the center of $G$, $Z(G)\neq 1${\rm )}.
\end{cor}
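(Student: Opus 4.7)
The plan is to assemble four demonstrably distinct values in $nse(G)=\alpha(G)$, namely $1$, $s_2$, and the two counts $s_p$, $s_q$ furnished by Lemma \ref{pq}. Once these four are shown to be pairwise different, the containment $\{1,s_2,s_p,s_q\}\subseteq\alpha(G)$ and the bound $|\alpha(G)|\geq 4$ both follow at once.

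First I would apply Lemma \ref{pq} to fix odd primes $p,q\in\pi(G)$ with $s_p\neq s_q$. Because $G$ is a nonabelian simple group, Feit--Thompson forces $|G|$ to be even, so Cauchy gives $s_2\geq 1$; moreover $s_2\neq 1$, since otherwise the unique involution would be invariant under conjugation, hence central, producing a proper nontrivial normal subgroup. For the odd primes, each cyclic subgroup of order $p$ contributes $\phi(p)=p-1$ elements of order $p$ (equivalently, invoke the divisibility $\phi(p)\mid s_p$ from Lemma \ref{d}), so $s_p$ is even and at least $2$; likewise for $s_q$. Thus $s_2,s_p,s_q$ are each different from $1$.

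The decisive step is a parity comparison that separates $s_2$ from $s_p$ and $s_q$. Pairing each $g\in G$ with $g^{-1}$ fixes exactly the identity together with the involutions, so
\[
|G|=1+s_2+2\cdot(\text{something}),
\]
and since $|G|$ is even this forces $s_2$ to be odd. Combined with the observation that $s_p$ and $s_q$ are even, we obtain $s_2\neq s_p$ and $s_2\neq s_q$. Together with $s_p\neq s_q$ from Lemma \ref{pq} and $1\notin\{s_2,s_p,s_q\}$, the four values are pairwise distinct and all lie in $\alpha(G)$.

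The only step with any substance is the parity dichotomy $s_2$ odd versus $s_p,s_q$ even; Lemma \ref{pq} supplies the rest and the simplicity/Feit--Thompson input is used only to rule out $s_2=1$ and to guarantee $2\in\pi(G)$. I do not expect any serious obstacle.
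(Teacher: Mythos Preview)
Your argument is correct. The paper states this corollary without proof, offering only the parenthetical remark that $s_2\neq 1$ (else $Z(G)\neq 1$); you supply the missing verifications that $1$, $s_2$, $s_p$, $s_q$ are pairwise distinct, and your parity dichotomy ($s_2$ odd, $s_p$ and $s_q$ even) is exactly the mechanism the paper itself deploys later in the proof of Lemma~\ref{pi}. The only cosmetic difference is that the paper obtains ``$s_2$ odd'' from Lemma~\ref{d} via $2\mid f(2)=1+s_2$, whereas you use the $g\leftrightarrow g^{-1}$ pairing; both are valid and amount to the same thing.
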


\begin{lemma}\label{pi}
Assume that $G$ is a group such that $\alpha(G)=\{1,s_r, s_p ,s_q\}$. Then
\item[(i)]
If $r=2$,  then $\pi(G) =\{2,p,q\}$;
\item[(ii)]
If $|G|$ is an odd order, then $\pi(G) =\{p,q,r\}$.
\end{lemma}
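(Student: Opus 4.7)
For part (i), assume $r=2$. The key preliminary is a parity filter: by Lemma~\ref{d}, $2\mid f(2)=1+s_2$, so $s_2=s_r$ is odd, whereas for any odd prime $\ell\in\pi(G)$ the elements of order $\ell$ pair with their inverses and so $s_\ell$ is even; combined with $\phi(\ell)=\ell-1\geq 2$ dividing $s_\ell$, this forces $s_\ell\in\alpha(G)\setminus\{1,s_r\}=\{s_p,s_q\}$ for every odd prime $\ell\in\pi(G)$. Thus $\pi(G)\subseteq\{2,p,q\}$ once we rule out a third odd prime in $\pi(G)$.

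Suppose $\ell$ is a third odd prime in $\pi(G)\setminus\{p,q\}$; without loss of generality $s_\ell=s_p$. Apply Lemma~\ref{d} to $k=\ell p$: $\ell p\mid 1+2s_p+s_{\ell p}$, and combined with $\ell p\mid s_p+1$ this gives $\ell p\mid s_{\ell p}-1$. In $\alpha(G)$ the value $s_{\ell p}$ cannot be $1$ (since $\phi(\ell p)\geq 4$ divides it), $s_2$ (parity), or $s_p$ (else $s_p\equiv 1\pmod\ell$ would contradict $s_p\equiv -1\pmod\ell$), so $s_{\ell p}=s_q$, giving $s_q\equiv 1\pmod{\ell p}$. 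Running the same analysis on the pair $(p,q)$ forces $s_{pq}=s_p$ and $q\mid s_p$; then on $(\ell,q)$ forces $s_{\ell q}=s_p$. Finally, Lemma~\ref{d} applied to $k=\ell p q$ with the collected values $s_\ell=s_p$, $s_{\ell p}=s_q$, $s_{\ell q}=s_p$, $s_{pq}=s_p$ yields $\ell p q\mid 1+4s_p+2s_q+s_{\ell p q}$, which after reduction modulo each of $\ell,p,q$ forces $s_{\ell p q}\equiv 1\pmod{\ell p q}$; but every candidate in $\alpha(G)\cup\{0\}$ violates one of these congruences (the value $0$ is ruled out directly, $1$ is too small as $\phi(\ell p q)\geq 8$ divides it, $s_2$ has the wrong parity, $s_p\equiv -1\pmod\ell$, and $s_q\equiv -1\pmod q$), a contradiction.

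Part (ii) runs in parallel: with $|G|$ odd the parity filter disappears but each prime's $s$-value lies in the three-element slot $\{s_r,s_p,s_q\}$; a hypothetical fourth odd prime triggers the same cascade of pair- and triple-Frobenius congruences, with a small amount of extra bookkeeping for the additional slot, and again the final constraint on $s_{\ell p q}$ collapses to $\equiv 1\pmod{\ell p q}$ while every value in $\alpha(G)\cup\{0\}$ fails. The main obstacle will be the case analysis in this last step: carefully tracking which $s$-value is assigned to each pair and triple product, and using the arithmetic relations $s_\ell\equiv -1\pmod\ell$ for each prime in play to eliminate every branch.
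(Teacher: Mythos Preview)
Your part (i) is correct and follows the paper's method: the parity filter forcing $s_\ell\in\{s_p,s_q\}$ for every odd prime $\ell$, then a cascade of Frobenius congruences $k\mid f(k)$ on products of two and three odd primes to reach a contradiction. The paper's bookkeeping differs only cosmetically (it takes the doubled value to be $s_q=s_t$ and derives $t\mid 2$, whereas you take $s_\ell=s_p$ and exhaust the candidates for $s_{\ell pq}$), but the engine is identical.

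For part (ii) your outline is on the right track but undersells the work, and the claim that everything funnels into a single congruence $s_{\ell pq}\equiv 1\pmod{\ell pq}$ does not match what actually happens. In the paper's execution one takes (by pigeonhole) $s_p=s_q=m$, $s_r=n$, $s_t=k$, first pins down $s_{pq}=n$, and then branches into \emph{four} cases according to the pair $(s_{pr},s_{qr})\in\{m,k\}^2$. In three of these cases the contradiction arrives already at the triple-product level via $r\mid 2$; only in the symmetric case $s_{pr}=s_{qr}=m$ must one push on to determine $s_{pt},s_{qt},s_{rt}$, the remaining triple products, and finally $s_{pqrt}$, where both possible values $n,k$ fail. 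So your sketch should be sharpened: the case split is on which of the two available slots $s_{pr}$ and $s_{qr}$ occupy, and the endgame is not uniform across cases.
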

\begin{proof}
Part (i).\;Since $s_2\neq 1$ we may assume that $s_2=n> 1$. Suppose, for a contradiction, that there exists $t\in \pi(G)\setminus\{2,p,q\}$. It follows that $s_p=m$, $s_q=s_t=k$ for some $m, k\in \mathbb{N}$.

By Lemma \ref{d}, $2\mid 1+s_2=1+n$, so $n$ is odd. Also, again by Lemma \ref{d}, one can conclude that $s_{2p}$, $s_{2q}$, $s_{2t}$, $s_{pq}$, $s_{pt}$, $s_{qt}$, $s_{2pq}$, $s_{2pt}$ and $s_{pqt}$ are even. Therefore
\begin{align*}
s_{2p}, s_{2q}, s_{2t}, s_{pq}, s_{pt}, s_{qt}, s_{2pq}, s_{2pt}, s_{pqt} \in \{0,m,k\}.
\end{align*}
On the other hand, $t\mid f(qt)=1+2k+s_{qt}$ and so $s_{qt}=m$. Also since $p\mid s_{qt}+s_{pqt}$, so $s_{pqt}=k$. Similarly, $s_{pq}=s_{pt}=m$. Now
\begin{align*}
t\mid f(pqt)&=1+s_p+s_q+s_t+s_{pt}+s_{pq}+s_{qt}+s_{pqt} \\
                  &=1+m+k+k+m+m+m+k \\
                  &=1+4m+3k
\end{align*}
and from $t\mid 1+2k+m$, $t\mid k+1$ and $t\mid m-1$, we get  $t\mid 2$, a contradiction.

Part (ii).\; Suppose on the contrary that there exists $t\in \pi(G)$ and $s_p=s_q=m$, $s_r=n$ and $s_t=k$. Since $p\mid f(pq)=1+2m+s_{pq}$, so $s_{pq} \in \{n,k\}$. Without loss of generality, let $s_{pq}=n$. Since $p\mid f(pq)=1+2m+n$, so $p\mid n-1$ and similarly $q\mid n-1$.

Now as $p\mid f(pr)=1+m+n+s_{pr}$, so $s_{pr}\in \{m,k\}$. Similarly $s_{qr}\in \{m, k\}$. Now we consider four cases:

\textbf{Case} (1)
Let $s_{pr}=s_{qr}=k$, then as $p\mid f(pr)$, so $p\mid k+1$. On the other hand, since $p\mid f(pt)=1+m+k+s_{pt}$, so $p\mid k+s_{pt}$, thus $s_{pt}=n$. Similarly, $s_{qt}=n$.
Now from relations $p\mid f(pqr)$ and $p\mid f(pt)$, we have $s_{pqr}=n$. Now as $r\mid f(pqr)=1+2m+3n+2k$ and $r\mid f(qr)=1+m+n+k$, so $r\mid 2$, a contradiction.

\textbf{Case}(2)
Let $s_{pr}=k$, $s_{qr}=m$. Now from relations $p\mid f(pr)$ and $p\mid f(pqr)$, this implies that $s_{pqr}=n$. Since $r\mid f(pr)=1+m+n+k$, $r\mid f(qr)=1+m+n+m$, $r\mid 1+n$. So $r\mid m$ and $r\mid k$. On the other hand, $r\mid f(pqr)=1+3m+3n+k$ and so $r\mid -2$, a contradiction.

\textbf{Case}(3)
Let $s_{pr}=m$ and $s_{qr}=k$. Since $r\mid f(pr)=1+2m+n$ and $r\mid f(qr)=1+m+n+k$, so $r\mid m$ and $r\mid k$. Also since $r\mid f(pqr)=1+3m+2n+k+s_{pqr}$, so $r\mid -1+s_{pqr}$ and so $s_{pqr}=n$, a contradiction.

\textbf{Case}(4)
Let $s_{pr}=s_{qr}=m$. Since $r\mid f(qr)=1+2m+n$, so $r\mid m$ and since $p\mid f(pqr)=1+4m+2n+s_{pqr}$, so $p\mid -1+s_{pqr}$, similarly $r\mid -1+s_{pqr}$, so $s_{pqr}=k$.

Now by a similar argument, we have:
\begin{align*}
s_{qrt}=s_{prt}=s_{pqt}=s_{pt}=s_{qt}=m, s_{rt}=s_{pqt}=n.
\end{align*}
Since $q\mid f(pqrt)=1+8m+4n+2k+s_{pqrt}$ and $q\mid f(qt)=1+2m+k$, so $q\mid k-1$ and since $q\mid n-1$ and $q\mid m+1$, so $q\mid -1+s_{pqrt}$ and $s_{pqrt}\in \{n,k\}$. Now if $s_{pqrt}=n$, then $r\mid s_{pqt}+s_{pqrt}=2n$, so $r\mid n$, which is a contradiction. If $s_{pqrt}=k$, then $t\mid s_{pqr}+s_{pqrt}=2k$, so $t\mid k$, which is a contradiction.
\end{proof}

We are now ready to conclude the proof of Theorem 1.1.\\

\noindent{\bf{Proof of Theorem1.1.}}
Since $G$ is simple so $s_2> 1$, let $s_2=m$. On the other hand, since $G$ is simple, so $|\pi(G)|\geqslant 3$.  By Lemma \ref{pq}, we can choose $\{2,p,q\} \subseteq \pi(G)$, such that $s_2 \neq s_p \neq s_q$ and so by lemma \ref{pi}, $\pi(G)=\{2, p ,q\}$. Now it is well-known that the nonabelian simple groups of order divisible by exact three primes are the following eight groups:
$L_2(q)$, where $q\in\{5, 7, 8, 9, 17\}$, $L_3(3)$, $U_3(3)$ and $U_4(2)$. Now by the computational
group theory system GAP, one can show that all mentioned groups are $\alpha_n$-group with $5\leq n$ except $A_5$.\\

In view of our main Theorem, we raise the following conjecture.

\begin{con}
 Let $S$ be a nonabelian simple $\alpha_n$-group and $G$ a $\alpha_n$-group such
that $|G| = |S|$. Then $G\cong S$.
\end{con}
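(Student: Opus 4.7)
The plan is to extend the argument used for $S = A_5$ in Theorem \ref{t1} to an arbitrary nonabelian simple $\alpha_n$-group $S$. The key observation is that once one establishes that $G$ itself must be nonabelian simple, the Classification of Finite Simple Groups essentially finishes the job, so the proof naturally splits into a ``reduction to the simple case'' and a ``CFSG finish''.

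Since $|G|=|S|$, we immediately have $\pi(G)=\pi(S)$ and the Sylow subgroup orders of $G$ match those of $S$. The first step is to show $G$ must itself be nonabelian simple. Suppose for contradiction that there exists a proper nontrivial $N \triangleleft G$. The Frobenius-type divisibilities $k\mid f(k)$ and $\phi(k)\mid s_k$ from Lemma \ref{d} propagate through $N$ and $G/N$, and the same-order class sizes of $G$ are built from those of $N$, $G/N$, and the fusion between them. One aims to use the very restrictive factorization of $|S|$ as the order of a nonabelian simple group, together with Shen's conjecture $|\pi(G)|\leq n$ (known for nilpotent groups by \cite{jz}) and the odd-prime separation of Lemma \ref{pq}, to force either $|\alpha(G)|>n$ or an order-mismatch with $|S|$. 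Iterating on composition factors should eventually leave only the simple possibility.

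Once $G$ is nonabelian simple with $|G|=|S|$, CFSG gives $G\cong S$ except for the few coincidences of orders among non-isomorphic simple groups — the classical pair $A_8$ and $L_3(4)$ of order $20160$, and the pairs $B_n(q)$ and $C_n(q)$ for odd $q$ and $n\geq 3$. For each such exceptional pair one computes (via GAP if necessary) the sizes $s_t$ and checks that the number of distinct sizes, or the specific values of the sizes, differ; this discriminates between the two groups by the $\alpha_n$-type and selects the correct $S$. Since the coincidences form a short explicit list, this is a finite verification.

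The main obstacle is clearly the reduction step: ruling out non-simple $G$ of order $|S|$ with $|\alpha(G)|=n$. The proof of Theorem \ref{t1} simply hypothesised simplicity; here we must derive it, and the combinatorics of the counting congruences of Lemma \ref{d} in the absence of simplicity becomes delicate, since extensions and direct products of smaller groups can produce unexpectedly small $|\alpha|$ (as witnessed already by the $\alpha_2$- and $\alpha_3$-groups classified by Shen). Controlling all possible non-simple group structures of a given large order uniformly will likely require character-theoretic input, or a clever induction on composition length, beyond the elementary divisibility arguments used in the present paper; this is where I expect the main work to lie.
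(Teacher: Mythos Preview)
The statement you are attempting to prove is stated in the paper as a \emph{conjecture}, not a theorem. The paper offers no proof whatsoever; it merely raises the question ``in view of our main Theorem'' and immediately follows it with cautionary remarks (Thompson's example of two non-isomorphic groups with the same order and the same $\alpha$-set, and the $L_2(7)$/$L_2(8)$ example showing the order hypothesis cannot be dropped). So there is nothing in the paper to compare your proposal against.

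As for the proposal itself, it is not a proof but a programme, and you correctly identify where it breaks down. The CFSG finish is fine in spirit: once $G$ is known to be nonabelian simple with $|G|=|S|$, the only ambiguity is the short list of order-coincidences, and for those one must check that the two simple groups have \emph{different} values of $|\alpha|$ (note that the conjecture only assumes $|\alpha(G)|=|\alpha(S)|$, not $\alpha(G)=\alpha(S)$, so if any such pair happened to share $|\alpha|$ the conjecture would simply be false). The real content, however, is the reduction step --- showing that a group $G$ with $|G|=|S|$ and $|\alpha(G)|=|\alpha(S)|$ must itself be simple --- and here your outline does not supply an argument. The divisibility constraints of Lemma~\ref{d} say nothing about normal subgroups directly, Shen's bound $|\pi(G)|\le n$ is itself conjectural outside the nilpotent case, and Lemma~\ref{pq} presupposes simplicity. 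None of the tools you invoke actually forces simplicity of $G$, and Thompson's example shows that even the stronger hypothesis $\alpha(G)=\alpha(S)$ together with $|G|=|S|$ can fail to pin down $G$ when $S$ is not simple; so any genuine proof must exploit the simplicity of $S$ in a way your sketch does not. In short, the gap you flag at the end is the entire problem, and it remains open.
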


We note that there are finite groups which are not characterizable even by $\alpha(G)$ and $|G|$. Thompson, in 1987, gave an example as follows: Let $G= (C_2\times C_2\times C_2\times C_2)\rtimes A_7$
and $H = L_3(4)\rtimes C_2$ be the maximal subgroups of $M_{23}$, the Mathieu group of degree 23. Then $\alpha(G)=\alpha(H)$ and $|G|=|H|$, but $G\cong H$. Also the condition $|G| = |S|$ is necessary in this conjecture. For example two groups $L_2(7)$ and $L_2(8)$ are $\alpha_5$-group and clearly $L_2(7)\not \simeq L_2(8)$.

Finally, in view of Lemma \ref{pq} and our computation together with the computational
group theory system GAP in investigating finite groups of small order suggests
that probably in every finite nonabelian simple group the following interesting property is satisfied (in fact, if $G$ is a finite group such that $s_p=s_q$ for some  prime divisors $p$ and $q$ of the order of $G$, then $G$ is not a simple group).

\begin{con}\label{co}
 Assume that $G$ be a finite nonabelian simple group. Then for any two prime divisors $p$ and $q$ of the order of $G$,  $s_p\neq s_q$.
\end{con}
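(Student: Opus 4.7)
\textbf{Proof proposal for Conjecture~\ref{co}.} My plan has three phases: a cheap parity step that disposes of the prime $2$, a combinatorial step that tries to push the bookkeeping in the proof of Lemma~\ref{pq} from its existence conclusion to the stronger universal conclusion sought here, and a case analysis via the classification of finite simple groups for what survives.

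First, the case where one of the two primes is $2$ is immediate from Lemma~\ref{d}: since $2\mid f(2)=1+s_2$, the quantity $s_2$ is odd, whereas for any odd prime $p\in\pi(G)$ the lemma gives $\phi(p)=p-1\mid s_p$ with $p-1$ even, so $s_p$ is even. Hence $s_2\neq s_p$ for every odd prime divisor $p$ of $|G|$, and the conjecture reduces to proving $s_p\neq s_q$ for distinct odd $p,q\in\pi(G)$.

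Next, assume for contradiction that $p,q$ are distinct odd primes in $\pi(G)$ with $s_p=s_q=n$. Then $p\mid 1+n$ and $q\mid 1+n$, so $pq\mid 1+n$; in particular $n\geq pq-1$ and $\mathrm{lcm}(p-1,q-1)\mid n$. I would then propagate the $f(\cdot)$-arithmetic used in the proof of Lemma~\ref{pq}, starting with $p\mid f(pq)=1+s_p+s_q+s_{pq}$, and moving up to $s_{2pq}$ and $s_{pqr}$ for a third prime $r\in\pi(G)$, aiming to force either $s_{pq}=0$ or a chain of strong constraints on further $s_{\bullet}$-values that lands us in either a $C_{2,2}$ configuration (so that Lemmas~\ref{0},~\ref{2},~\ref{4} finish the job) or a situation where the prime graph is connected and Lemma~\ref{1} applies. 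The obstacle here is that Lemma~\ref{pq} exploited the much stronger hypothesis that \emph{all} odd $s_{p_i}$ coincide, which forced $s_{p_ip_j}\neq 0$ globally; under the per-pair hypothesis of Conjecture~\ref{co} the rest of $\pi(G)$ is unconstrained, so the bookkeeping can only narrow the search, not terminate it.

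The main burden therefore falls on a CFSG case analysis. For the sporadic groups (and the Tits group), direct inspection of character tables in GAP settles the matter. For alternating groups $A_n$ with $n\geq 5$ and an odd prime $p\leq n$ one has $s_p=\sum_{k=1}^{\lfloor n/p\rfloor}\frac{n!}{p^k\,k!\,(n-pk)!}$; the largest-$k$ summand dominates and its $p$-adic valuation depends sharply on $p$, so equality $s_p=s_q$ between distinct admissible primes should yield to a short asymptotic estimate plus a finite check for small $n$. For the groups of Lie type I would separate the defining-characteristic prime from cross-characteristic primes, attach each cross-characteristic prime to a maximal torus via Zsigmondy's theorem, and compute $s_p=(p-1)\cdot|G|/|N_G(\langle x\rangle)|$ summed over conjugacy classes of cyclic subgroups of order $p$. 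The step I expect to be hardest is precisely here: two distinct cross-characteristic primes may be associated with different maximal tori whose orders, after extracting the relevant prime parts, conspire to give numerically equal counts, and eliminating such coincidences uniformly across all classical and exceptional series---rather than family by family---is the real technical difficulty and, in my view, the reason the statement is posed as a conjecture rather than a theorem.
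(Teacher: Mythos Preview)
The paper does not prove this statement: it is explicitly recorded as an open conjecture, motivated by Lemma~\ref{pq} and by GAP experiments on groups of small order. There is therefore no proof in the paper to compare your proposal against.

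As for your proposal itself, it is not a proof and you essentially say so yourself. Your first phase (the parity argument showing $s_2$ is odd while $s_p$ is even for odd $p$) is correct and cleanly disposes of the case where one of the two primes is $2$; this is a genuine observation that the paper does not make explicit. Your second phase, however, correctly diagnoses its own failure: the divisibility bookkeeping from Lemma~\ref{pq} relied on the global hypothesis that \emph{all} odd $s_{p_i}$ coincide, and under the per-pair hypothesis of the conjecture you cannot force $s_{pq}\neq 0$ for arbitrary other primes, so neither the $C_{2,2}$ route nor the connected prime graph route via Lemma~\ref{1} is available. Your third phase is a CFSG case analysis which you outline but do not carry out; you identify the likely bottleneck (coincidences between $s_p$ and $s_q$ for cross-characteristic primes attached to different maximal tori in groups of Lie type) and stop there. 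That is an honest assessment of where the difficulty lies, but it leaves the conjecture open, exactly as the paper does.
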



\begin{thebibliography}{99}

\bibitem{f} G. Frobenius,  Verallgemeinerung des sylowschen satze. Berliner sitz, (1895) 981-993.
\bibitem{jz} L. Jafari Taghvasani  and M. Zarrin, Shen's conjecture on groups with given same order type,
Int. J. Group Theory, to appear.
\bibitem{kka} M. Khatami, B. Khosravi, Z. Akhlaghi, A new characterization for some linear groups.
Monatsh. Math. \textbf{163} (2011), 39-50.
\bibitem{prime}
M. S. Lucido and A. R Moghadamfar, Group with complete prime graph connected components,
J. Group theory, \textbf{31} (2004), 373-384.
\bibitem{sch} O. Yu. Schmidt, Groups all of whose subgroups are nilpotent, Mat. Sbornik \textbf{31} (1924),
366-372. (Russian).
\bibitem{shen} R. Shen, On groups with given same order type. Comm. Algebra \textbf{40} (2012), 2140-2150.
\bibitem{ssjm} R. Shen, C. Shao, Q. Jiang, W. Mazurov, A new characterization $A_5$, Monatsh. Math. \textbf{160} (2010), 337-341.
 \bibitem{suzuki} M. Suzuki, Finite groups with nilpotent centralizers,
Trans Amer Math \textbf{99} (1961), 425-470.
\bibitem{w}  J.S. Williams, Prime graph components of finite groups. J. Algebra \textbf{69} (1981) 487-513.
\end{thebibliography}
\end{document}